\documentclass{amsart}


\usepackage{amssymb}
\usepackage{graphicx}
\usepackage{MnSymbol}
\usepackage[all]{xy}

\usepackage{enumerate}

\usepackage{amsmath,amscd}

\usepackage{amsthm}

\title{On homomorphic images of ultraproducts}

\author{Samuel M. Corson}
\address{E. T. S. I. I. Universidad Polit\'{e}cnica de Madrid, Jos\'{e} Guti\'{e}rrez Abascal 2, 28006 Madrid, Spain}
\email{sammyc973@gmail.com}

\bibliographystyle{te}

\theoremstyle{definition}\newtheorem{theorem}{Theorem}

\theoremstyle{definition}

\theoremstyle{definition}\newtheorem{corollary}[theorem]{Corollary}
\theoremstyle{definition}\newtheorem{proposition}[theorem]{Proposition}
\theoremstyle{definition}\newtheorem{definition}[theorem]{Definition}
\theoremstyle{definition}\newtheorem{question}[theorem]{Question}
\theoremstyle{definition}
\theoremstyle{definition}\newtheorem{remark}[theorem]{Remark}
\theoremstyle{definition}
\theoremstyle{definition}\newtheorem{lemma}[theorem]{Lemma}
\theoremstyle{definition}
\theoremstyle{definition}
\theoremstyle{definition}
\theoremstyle{definition}
\theoremstyle{definition}\newtheorem{definitions}[theorem]{Definitions}
\theoremstyle{definition}

\newcommand{\supp}{\textbf{supp}}
\newcommand{\ex}{\operatorname{ex}}
\newcommand{\Bdd}{\textbf{Bdd}}

\begin{document}

\keywords{ultraproduct, algebraically compact group, cotorsion group}
\subjclass[2020]{Primary 03C20, 20K25; Secondary 08B25}
\thanks{The work of the author was supported by Basque Government Grant IT1483-22 and Spanish Government Grants PID2019-107444GA-I00 and PID2020-117281GB-I00.}

\begin{abstract}

In this short note we answer some questions of Bergman regarding homomorphic images of (ultra)products of groups.

\end{abstract}

\maketitle

\begin{section}{Introduction}

Ultrafilter constructions provide a rich source of examples and simplify many arguments.  Though such constructions nicely serve their raison d'\^{e}tre, they tend to be mysterious and unwieldy in some ways.  We consider natural questions in group theory, which derive from the following question.  If $\mathcal{U}$ is a nonprincipal ultrafilter over the set $\omega$ of natural numbers, then what can be said regarding homomorphic images of an ultraproduct $(\prod_{i \in \omega} G_i)/\mathcal{U}$?  

We focus on some specific questions of George Bergman.  For Questions \ref{Kour2013a} and \ref{Kour2013b} see \cite[Qu. 35]{B}, \cite[Qu. 20.13 (a)]{KhMaz} and respectively \cite[Qu. 20.13 (b)]{KhMaz}.  Questions \ref{localtoglobal} and \ref{isitcotorsion} are respectively \cite[Qu. 19]{B} and \cite[Qu. 33]{B}.  Question \ref{makingthesameimages} is \cite[Qu 17]{B}, \cite[Qu. 20.10 (b)]{KhMaz}.

\begin{question}\label{Kour2013a}  If an abelian group can be written as a homomorphic image of a non-principal countable ultraproduct of not necessarily abelian groups $G_i$, must it be a homomorphic image of a nonprincipal countable ultraproduct of abelian groups?
\end{question}

\begin{question}\label{Kour2013b}  If an abelian group can be written as a homomorphic image of a direct product of an infinite family of not necessarily abelian \emph{finite} groups, can it be written as a homomorphic image of a direct product of finite abelian groups?
\end{question}

\begin{question}\label{localtoglobal} If $\mathcal{U}$ is a nonprincipal ultrafilter on $\omega$ and $H$ is a group such that every $h \in H$ lies in a homomorphic image within $H$ of $(\prod_{\omega}\mathbb{Z})/\mathcal{U}$, must $H$ be a homomorphic image of a nonprincipal ultraproduct $(\prod_{i \in \omega} G_i)/\mathcal{V}$?
\end{question}

\begin{question} \label{isitcotorsion}  If an abelian group $A$ is such that every homomorphism $\bigoplus_{\omega} \mathbb{Z} \rightarrow A$ extends to a homomorphism $\prod_{\omega}\mathbb{Z} \rightarrow A$, then is $A$ cotorsion (see Definition \ref{cotorsiondef})?
\end{question}

\begin{question} \label{makingthesameimages}  If $\mathcal{U}$ and $\mathcal{U}'$ are nonprincipal ultrafilters on $\omega$, does there exist a nonprincipal ultrafilter $\mathcal{U}''$ such that every group which is the homomorphic image of an ultraproduct over $\mathcal{U}$ or over $\mathcal{U}'$ can be written as an image of an ultraproduct over $\mathcal{U}''$?
\end{question}

Questions 1 and 2 will be answered affirmatively using some new characterizations of cotorsion abelian groups (Theorem \ref{New}).  We'll see that the direct sum $\bigoplus_{n \in \omega \setminus \{0\}} \mathbb{Z}/n\mathbb{Z}$ gives a negative answer to Question \ref{localtoglobal}, and many more examples will follow from Corollary \ref{directsum}.  It was added in the proof of the article \cite{B} that Question \ref{isitcotorsion} has a positive solution; we provide a very short argument using the machinery presented.  It may not be immediately clear what Questions \ref{Kour2013b} and \ref {isitcotorsion} have to do with ultraproducts, but it will be seen that all these questions are of a kind.  The principal tool in attacking Questions 1 - 4 is the notion of Higman-completeness, due to Herefort and Hojka \cite{HH}.  We will not completely answer Question \ref{makingthesameimages}, but will show that an affirmative answer  is consistent with ZFC.  Indeed, a positive answer follows from Blass's near coherence of filters \cite{Blass}.

The paper is organized as follows.  Questions 1 - 4 are answered in Section \ref{Higmansection}.  A greater variety of examples for Question \ref{localtoglobal} is given in Section \ref{Chasesection}, and Question \ref{makingthesameimages} is treated in Section \ref{RudinKeislersection}.

\end{section}

\begin{section}{Higman-complete groups}\label{Higmansection}

We begin this section by defining a handful of concepts and state a proposition detailing some known relationships among them.  Next, we pass through some lemmas towards Theorem \ref{New}, answering some questions along the way.  We emphasize that although much of the discourse will involve abelian groups, \emph{we do not require any of our groups to be abelian unless so stated}.

\begin{definition}\label{cotorsiondef}\cite[\textsection 9.6]{F}  An abelian group $A$ is \emph{cotorsion} if every short exact sequence of abelian groups

\begin{center}

$0 \rightarrow A \rightarrow B \rightarrow C \rightarrow 0$

\end{center}

\noindent with $C$ torsion-free necessarily splits.

\end{definition}

\begin{definition}\cite[\textsection 6.1]{F}  An abelian group $A$ is \emph{algebraically compact} if there exists a compact abelian group $\mathbb{A}$ in which $A$ is a direct summand.
\end{definition}

\begin{definition}\cite{HH}  A group $G$ is \emph{Higman-complete} if for each sequence $(f_i)_{i \in \omega}$ of elements in $G$ and sequence of free words in two variables $(w_i)_{i \in \omega}$ there exists a sequence $(h_i)_{i \in \omega}$ of elements in $G$ such that $h_i = w_i(f_i, h_{i + 1})$ for each $i \in \omega$.
\end{definition}

\begin{remark}\cite[Lemma 2]{HH}\label{homHig}  The class of Higman-complete groups is closed under taking homomorphic images.  
\end{remark}

The following is known.

\begin{proposition}\label{Known}  Let $A$ be an abelian group.  The following are equivalent.

\begin{enumerate}[(i)]

\item $A$ is cotorsion.

\item $A$ is a homomorphic image of an algebraically compact group.

\item $A$ is Higman-complete.

\item $A$ is the homomorphic image of an ultraproduct $(\prod_{i \in \omega} A_i)/\mathcal{U}$, where $\mathcal{U}$ is nonprincipal and the $A_i$ are abelian.

\end{enumerate}
\end{proposition}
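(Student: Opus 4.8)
The plan is to establish the cycle $(i)\Rightarrow(ii)\Rightarrow(iv)\Rightarrow(iii)\Rightarrow(i)$, leaning on standard structure theory for the first implications and on Higman-completeness for the last two. First I would recall that every cotorsion group is algebraically compact precisely up to a free summand — more usefully, by the classical theory (Fuchs, \textsection 9.6), a group is cotorsion iff it is a homomorphic image of an algebraically compact group, so $(i)\Leftrightarrow(ii)$ is essentially a restatement; I would cite this rather than reprove it. For $(ii)\Rightarrow(iv)$ the key point is that every algebraically compact abelian group is itself a homomorphic image (indeed a direct summand, hence a retract) of a group of the form $(\prod_{i\in\omega}A_i)/\mathcal U$ with the $A_i$ abelian. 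Concretely, a compact abelian group $\mathbb A$ embeds as a pure subgroup — and, being algebraically compact, as a direct summand — of a product $\prod_i A_i$ of cyclic (or bounded) groups after reduction modulo a suitable ultrafilter; alternatively one uses that any reduced algebraically compact group is a product of groups of the form $\mathbb Z_p$-adics and finite cyclic pieces, each of which is a countable ultraproduct of finite cyclic groups. Composing the quotient $\mathbb A\to A$ with the retraction gives $(iv)$.

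The heart of the argument is the pair $(iv)\Rightarrow(iii)\Rightarrow(i)$. For $(iv)\Rightarrow(iii)$ I would first show directly that any countably-indexed ultraproduct $(\prod_{i\in\omega}A_i)/\mathcal U$ over a nonprincipal $\mathcal U$ is Higman-complete: given sequences $(f_i)$ and $(w_i)$, represent $f_i$ by a thread $(f_i^{(j)})_{j\in\omega}$; for each fixed index $j$ one must solve the system $h_i^{(j)}=w_i\big(f_i^{(j)},h_{i+1}^{(j)}\big)$ in a single group $A_j$, which is generally impossible for all $i$ at once, so instead one solves it only for $i<j$ (finitely many equations, solvable by back-substitution starting from an arbitrary value of $h_j^{(j)}$) and sets $h_i^{(j)}$ arbitrarily for $i\ge j$. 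Then for each fixed $i$, the set $\{j:i<j\}\in\mathcal U$, so modulo $\mathcal U$ the equation $h_i=w_i(f_i,h_{i+1})$ holds; this is the standard "co-final solvability" trick and it is exactly where nonprincipality of $\mathcal U$ is used. Since Higman-completeness passes to homomorphic images by Remark \ref{homHig}, $(iv)\Rightarrow(iii)$ follows. Finally, for $(iii)\Rightarrow(i)$, suppose $A$ is Higman-complete and $0\to A\to B\to C\to 0$ has $C$ torsion-free; to split it is equivalent to extending $\mathrm{id}_A$ along the inclusion, and the obstruction lives in $\mathrm{Ext}(C,A)$. Reducing to $C$ of rank one (or using that it suffices to handle $C=\mathbb Q$ and $C=\mathbb Q/\mathbb Z$-type pieces, whence cotorsion reduces to $\mathrm{Ext}(\mathbb Q,A)=0$), one encodes a non-splitting extension as an unsolvable infinite system of the Higman type: an element $c\in C$ divisible by every $n!$ lifts to $b_0\in B$, and the relations $b_n = n\cdot b_{n+1} + a_n$ (with $a_n\in A$ the error terms) are instances of $h_i=w_i(f_i,h_{i+1})$ with $w_i(x,y)=y^{i+1}x$; Higman-completeness produces solutions $h_n\in A$, and $b_0 - h_0$ is then a divisible lift, forcing the sequence to split.

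I expect the main obstacle to be $(ii)\Rightarrow(iv)$: pinning down cleanly why every algebraically compact abelian group is a retract of a countable nonprincipal ultraproduct of abelian groups. The cardinality bookkeeping (an algebraically compact group can be large, while a single countable ultraproduct has bounded size) means one cannot hope to realize every algebraically compact group this way — so the statement must really be that every \emph{cotorsion} $A$, which is a quotient of such a group, is a quotient of an ultraproduct, and the reduction has to be done at the level of the presenting exact sequences rather than the groups themselves. The cleanest route is probably to bypass $(ii)$ in the cycle: prove $(i)\Leftrightarrow(iii)$ and $(iii)\Rightarrow(iv)\Rightarrow(iii)$ separately (the latter two as above), and invoke the literature only for the equivalence of $(i)$ and $(ii)$, which is genuinely classical. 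Everything else is then elementary manipulation with threads in ultraproducts and back-substitution in free words, modulo the standard reduction of the cotorsion property to vanishing of $\mathrm{Ext}$ against torsion-free groups of small rank.
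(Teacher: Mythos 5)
Your overall plan (a cycle through (i), (ii), (iv), (iii)) is reasonable, and two of your steps are sound: the coordinatewise back-substitution argument that a countable nonprincipal ultraproduct is Higman-complete is essentially the paper's Lemma \ref{prodmodsum} (the paper proves it for $\prod G_i/\bigoplus G_i$ and passes to the ultraproduct via Remark \ref{homHig}), and your (iii)$\Rightarrow$(i) sketch --- reducing cotorsion to $\mathrm{Ext}(\mathbb{Q},A)=0$, solving $h_n=a_n+(n+1)h_{n+1}$ by Higman-completeness and correcting a lift of $1/n!$ --- is the Herfort--Hojka argument and works, modulo the classical fact that cotorsion is equivalent to $\mathrm{Ext}(\mathbb{Q},A)=0$ (your aside about ``$\mathbb{Q}/\mathbb{Z}$-type pieces'' is beside the point, since $C$ is torsion-free). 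Note, though, that the paper does not prove any of this: Proposition \ref{Known} is stated as known and is proved purely by citation ((i)$\Leftrightarrow$(ii) classical, (i)$\Leftrightarrow$(iii) is \cite[Theorem 3]{HH}, (i)$\Leftrightarrow$(iv) is \cite[Proposition 23]{B}), so you are reproving literature results, which is fine only if the reproof is complete.

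It is not, and the genuine gap is the entry into (iv). Your justification of (ii)$\Rightarrow$(iv) slides between ``compact'' and ``algebraically compact,'' and the claim that the building blocks of a reduced algebraically compact group (e.g.\ the $p$-adic integers) are themselves countable ultraproducts of finite cyclic groups is false as stated: $(\prod_n\mathbb{Z}/p^n\mathbb{Z})/\mathcal{U}$ has nontrivial torsion, so it is not $J_p$. Worse, in your last paragraph you disavow this step on cardinality grounds and propose to bypass (ii), but the substitute (``prove (iii)$\Rightarrow$(iv) as above'') refers to an argument you never gave, so in the final version of your plan no implication into (iv) is established at all. The cardinality objection is itself spurious: the factors $A_i$ in (iv) are arbitrary abelian groups, so countably indexed ultraproducts carry no size bound. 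The clean repair is to take all $A_i$ equal to the algebraically compact group $\mathbb{A}$: by \L{}o\'{s}'s theorem the diagonal embedding $\mathbb{A}\rightarrow(\prod_{\omega}\mathbb{A})/\mathcal{U}$ is elementary, hence pure, and algebraic compactness (pure-injectivity) makes it split, so $\mathbb{A}$ is a retract of a countable nonprincipal ultrapower; composing with a surjection $\mathbb{A}\rightarrow A$ gives (ii)$\Rightarrow$(iv), and with that your cycle closes. As written, it does not.
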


\begin{proof} 
The equivalence of (i) and (ii) is classically known.  The equivalence of (i) and (iii) is \cite[Theorem 3]{HH}.  The equivalence of (i) and (iv) is in \cite[Proposition 23]{B}.
\end{proof}

With this information we are already equipped for the following.

\begin{proof}[Proof of affirmative solution to Question \ref{isitcotorsion}]  Assume the hypotheses.  We will show that $A$ is Higman-complete, and the fact that $A$ is cotorsion then follows immediately from Proposition \ref{Known}.  Let $(f_i)_{i \in \omega}$ be a sequence of elements of $A$ and $(w_i)_{i \in \omega}$ a sequence of words in two variables.  Since $A$ is abelian, we can without loss of generality assume that the word $w_i(x, y)$ is of form $z_ix + z_i'y$ where $z_i, z_i' \in \mathbb{Z}$.  For each $i \in \omega$, let $\delta_i \in \mathbb{Z}^{\omega}$ be the element with $\delta_i(i) = 1$ and $\delta_i(j) = 0$ for $j \neq i$.  Letting $\phi: \bigoplus_{\omega} \mathbb{Z} \rightarrow A$ be defined by the function $\delta_i \mapsto f_i$, we obtain (by the hypothesis
of Question 4) an extension $\Phi: \mathbb{Z}^{\omega} \rightarrow A$.  Letting

\begin{itemize}

\item $h_0 = \Phi(z_0, z_0'z_1, z_0'z_1'z_2, z_0'z_1'z_2'z_3, \ldots)$

\item $h_1 = \Phi(0, z_1, z_1'z_2, z_1'z_2'z_3, \ldots)$

\item $h_2 = \Phi(0, 0, z_2, z_2'z_3, z_2'z_3'z_4, \ldots)$

$\vdots$

\end{itemize}

\noindent it is easy to check that the sequence $(h_i)_{i\in \omega}$ satisfies $h_i = w_i(f_i, h_{i + 1})$.  For example, $$h_0 = \Phi(z_0\delta_0) + z_0'\Phi(0, z_1, z_1'z_2, \ldots) = z_0f_0 + z_0'h_1$$ and for $i > 0$ the check is entirely analogous.  Thus $A$ is Higman-complete, and therefore cotorsion by Proposition \ref{Known}.

\end{proof}

\begin{lemma}\label{prodmodsum}  For an arbitrary sequence of groups $(G_i)_{i \in \omega}$ the group $$G = (\prod_{i \in \omega} G_i)/(\bigoplus_{i \in \omega} G_i)$$ is Higman complete.  Also, for any nonprincipal ultrafilter $\mathcal{U}$ the ultraproduct $(\prod_{i \in \omega} G_i)/\mathcal{U}$ is Higman-complete.
\end{lemma}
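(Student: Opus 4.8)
The key point is that in the quotient $G = (\prod_{i\in\omega} G_i)/(\bigoplus_{i\in\omega} G_i)$, an element is determined by a "tail", and shifting the index forward by one loses only finitely much information, so one can build the sequence $(h_i)$ by an infinite-product-like construction that makes sense precisely because we have killed the direct sum. Concretely, given a sequence $(f_i)_{i\in\omega}$ in $G$ and free words $(w_i)_{i\in\omega}$ in two variables $x,y$, I would first lift each $f_i$ to an element $\widehat{f}_i = (\widehat{f}_i(k))_{k\in\omega} \in \prod_{k\in\omega} G_k$. Then I want to produce $\widehat{h}_i \in \prod_k G_k$ representing the desired $h_i$, with $\widehat h_i = w_i(\widehat f_i, \widehat h_{i+1})$ holding \emph{modulo $\bigoplus_k G_k$}, i.e. in all but finitely many coordinates.

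The natural choice: in coordinate $k$, set
\[
\widehat{h}_i(k) \;=\; w_i\big(\widehat{f}_i(k),\, w_{i+1}\big(\widehat{f}_{i+1}(k),\, \cdots w_{k-1}(\widehat f_{k-1}(k),\, 1_{G_k}) \cdots \big)\big)
\]
for $k > i$ (and, say, $\widehat h_i(k) = 1$ for $k\le i$). That is, in the $k$-th coordinate we unfold the recursion $k-i$ times using the actual group multiplication in $G_k$ and then stop by plugging in the identity. This is a finite word in each coordinate, so $\widehat h_i$ is a well-defined element of $\prod_k G_k$. Now compare $\widehat h_i$ with $w_i(\widehat f_i, \widehat h_{i+1})$ coordinatewise: for every $k$ with $k > i+1$, the definition of $\widehat h_i(k)$ is exactly $w_i(\widehat f_i(k), \widehat h_{i+1}(k))$ because the unfolding for $\widehat h_i(k)$ is one step deeper than that for $\widehat h_{i+1}(k)$ and the truncation happens at the same place; the only coordinates where they can disagree are $k \le i+1$, a finite set. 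Hence $h_i := \widehat h_i + \bigoplus_k G_k$ satisfies $h_i = w_i(f_i, h_{i+1})$ in $G$, proving $G$ is Higman-complete.

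**The ultraproduct case.** For a nonprincipal ultrafilter $\mathcal{U}$, the fastest route is to deduce it from the first part together with Remark \ref{homHig}: there is a surjective homomorphism $(\prod_{i\in\omega} G_i)/(\bigoplus_{i\in\omega} G_i) \twoheadrightarrow (\prod_{i\in\omega} G_i)/\mathcal{U}$, since $\mathcal{U}$ being nonprincipal means $\bigoplus_{i\in\omega} G_i$ lies in the kernel of the quotient map $\prod_i G_i \to (\prod_i G_i)/\mathcal{U}$ (every cofinite set, hence every set with finite complement, belongs to $\mathcal{U}$). Since Higman-completeness passes to homomorphic images by Remark \ref{homHig}, the ultraproduct is Higman-complete. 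Alternatively one can run the same explicit construction directly, replacing "all but finitely many coordinates" by "a set of coordinates in $\mathcal{U}$", but the reduction is cleaner.

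**Main obstacle.** The only genuinely delicate point is verifying that the coordinatewise truncated unfoldings of $\widehat h_i$ and $\widehat h_{i+1}$ agree on a cofinite set of coordinates — i.e. that the bookkeeping of "depth of unfolding versus coordinate index" works out so that the mismatch set is exactly $\{k : k \le i+1\}$ and not something larger. This is a routine but slightly fiddly induction on the structure of the nested word, and I would present it carefully rather than wave at it; everything else is formal.
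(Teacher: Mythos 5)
Your construction is correct and is essentially the paper's own argument: the author likewise defines representatives coordinatewise by finitely truncated unfoldings of the nested words (so the defining equations hold in all but finitely many coordinates, hence modulo $\bigoplus_i G_i$), and obtains the ultraproduct case exactly as you do, by noting it is a quotient of $(\prod_i G_i)/(\bigoplus_i G_i)$ and invoking Remark \ref{homHig}. Only a cosmetic remark: write the coset multiplicatively, e.g. $h_i := \widehat h_i\,(\bigoplus_k G_k)$, since the $G_k$ need not be abelian.
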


\begin{proof}
In this lemma we will use symbols $\zeta$ and $\eta$ to denote elements of $\prod_{i \in \omega} G_i$ and $[\zeta], [\eta]$ to denote elements of $G$.  Let a sequence $([\zeta_\ell])_{\ell \in \omega}$ of elements of $G$ be given, together with a sequence $(w_{\ell})_{\ell \in \omega}$ of free words in two variables.  For each $\ell \in \omega$ we fix a representative $\zeta_{\ell} \in [\zeta_{\ell}]$.  We shall define by induction a sequence $(\eta_{\ell})_{\ell \in \omega}$ of elements in $\prod_{i \in \omega} G_i$.  Let $\eta_0(0) = w_0(\zeta_0(0), 1_{G_0})$.  Let $\eta_1(1) = w_1(\zeta_1(1), 1_{G_1})$ and $\eta_0(1) = w_0(\zeta_0(1), \eta_1(1))$.  Generally, if $\eta_{\ell}(j)$ has been defined for all $\ell \leq j \leq k$, then we let

\begin{itemize}
\item $\eta_{k + 1}(k + 1) = w_{k + 1}(\zeta_{k + 1}(k + 1), 1_{G_{k + 1}})$

\item $\eta_k(k + 1) = w_k(\zeta_k(k + 1), \eta_{k + 1}(k + 1))$

\item $\eta_{k - 1}(k + 1) = w_{k  - 1}(\zeta_{k - 1}(k + 1), \eta_k(k + 1))$

$\vdots$

\item $\eta_0(k + 1) = w_0(\zeta_0(k + 1), \eta_1(k + 1))$.

\end{itemize}

\noindent For $j <  \ell$ we let $\eta_{\ell}(j) = 1_{G_j}$.

We claim that the sequence $([\eta_{\ell}])_{\ell \in \omega}$ of elements in $G$ satisfies the definition of Higman-completeness.  To see this, let $\ell \in \omega$ be given.  By construction we have for all $j \geq \ell$ that $\eta_{\ell}(j) = w_{\ell}(\zeta_{\ell}(j), \eta_{\ell + 1}(j))$, so that we can indeed write $[\eta_{\ell}] = w_{\ell}([\zeta_{\ell}], [\eta_{\ell + 1}])$.

For the claim in the second sentence of this lemma we note that $(\prod_{i \in \omega} G_i)/\mathcal{U}$ is a quotient of $(\prod_{i \in \omega} G_i)/(\bigoplus_{i \in \omega} G_i)$ and apply Remark \ref{homHig}.

\end{proof}

\begin{proof}[Proof of negative answer to Question \ref{localtoglobal}]  Consider the abelian group $$H = \bigoplus_{n \in \omega} C_n$$ where the $C_n$ are finite cyclic groups of unbounded orders. This group is torsion, and so each $h \in H$ is an element of a finite cyclic subgroup $C$ of $H$, say $|C| = m$, and it a trivial matter to surject $(\prod_{\omega}\mathbb{Z})/\mathcal{U}$ onto $(\prod_{\omega}\mathbb{Z}/m\mathbb{Z})/\mathcal{U}$ and as $m$ is finite we have $(\prod_{\omega}\mathbb{Z}/m\mathbb{Z})/\mathcal{U}$ isomorphic to $C$.  Suppose by way of contradiction that $H$ is a homomorphic image of a nonprincipal ultraproduct $(\prod_{i \in \omega} G_i)/\mathcal{V}$.  Then $H$ is Higman-complete (by Lemma \ref{prodmodsum} and Remark \ref{homHig}).  Thus $H$ is an abelian group which is Higman-complete, and so $H$ is cotorsion (Proposition \ref{Known}).  Since $H$ is a torsion cotorsion group, it is a direct sum of a bounded group (i.e. a group in which $a^n = 0$ for some universal $n \in \omega$) and a divisible group (\cite[Corollary 9.8.4]{F}).  However $H$ has no nontrivial divisible elements and is not bounded, a contradiction.  We shall later see that many nonabelian groups of this flavor also serve as counterexamples.
\end{proof}

We interrupt at this stage to give a result involving elementary equivalence, which may be of independent interest.

\begin{proposition}\label{cotorsionimage}  Let $G$ be a group.  The following are equivalent.

\begin{enumerate}[(a)]

\item  There exists a group $H$ which is elementarily equivalent to $G$ such that the abelianization $H/H'$ is nontrivial cotorsion.

\item  Either $G$ is not perfect or $G$ has elements of unbounded commutator length.

\end{enumerate}

\end{proposition}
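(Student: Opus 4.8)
The plan is to reformulate (b) as a first-order scheme and then realize (a) by passing to an ultrapower of $G$. First I would record the following elementary reformulation: condition (b) holds for $G$ if and only if for every $n \in \omega$ there exists an element of $G$ which cannot be written as a product of at most $n$ commutators. Indeed, if $G$ is not perfect one takes a fixed $x \notin G'$ as a witness for every $n$, while if $G$ is perfect with elements of unbounded commutator length one chooses the witnesses directly; conversely, if no such witness exists for some $n_0$, then every element of $G$ is a product of at most $n_0$ commutators, which forces $G = G'$ together with a uniform bound on commutator length, i.e.\ the negation of (b). The point of this reformulation is that for each fixed $n$ the statement ``every element is a product of at most $n$ commutators'' is a single first-order sentence $\sigma_n$ (trivial commutators $[1,1]$ allow one to pad ``at most $n$'' to ``exactly $n$'').

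For the implication (a) $\Rightarrow$ (b) I would argue the contrapositive. If (b) fails then $G \models \sigma_{n_0}$ for some $n_0$, so any group $H$ elementarily equivalent to $G$ also satisfies $\sigma_{n_0}$, hence is perfect, hence has $H/H'$ trivial; thus (a) fails.

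For (b) $\Rightarrow$ (a) I would fix any nonprincipal ultrafilter $\mathcal{U}$ on $\omega$ and set $H = G^{\omega}/\mathcal{U}$. By {\L}o\'{s}'s theorem $H$ is elementarily equivalent to (indeed an elementary extension of) $G$. Since $H$ is an ultraproduct it is Higman-complete by Lemma \ref{prodmodsum}, so its homomorphic image $H/H'$ is Higman-complete by Remark \ref{homHig}, and being abelian it is cotorsion by Proposition \ref{Known}. It remains to check that $H/H'$ is nontrivial, i.e.\ that $H$ is not perfect. Using the reformulation of (b), choose for each $n$ an element $x_n \in G$ that is not a product of at most $n$ commutators and consider the class $x = [(x_n)_{n \in \omega}] \in H$. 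Were $x$ a product of $k$ commutators in $H$, then for $\mathcal{U}$-almost every $n$ the coordinate $x_n$ would be a product of $k$ commutators in $G$; since $\{n : n \geq k\}$ is cofinite and hence in $\mathcal{U}$, some such $n$ satisfies $n \geq k$, whence $x_n$ is a product of $k \leq n$ commutators, contradicting its choice. Therefore $x \notin H'$ and $H/H' \neq 0$.

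The point requiring care is precisely this last nontriviality argument: one cannot simply invoke $H' = (G')^{\omega}/\mathcal{U}$, because when commutator lengths in $G$ are unbounded the commutator subgroup is not first-order definable and the ultrapower of $G'$ may be strictly larger than $H'$. Choosing witnesses $x_n$ of strictly increasing ``commutator complexity'' and using that $\mathcal{U}$ extends the cofinite filter is exactly what circumvents this obstacle; everything else is a routine application of {\L}o\'{s}'s theorem together with Lemma \ref{prodmodsum}, Remark \ref{homHig} and Proposition \ref{Known}.
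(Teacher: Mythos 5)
Your proof is correct and follows essentially the same route as the paper: pass to an ultrapower $(\prod_{\omega}G)/\mathcal{U}$, get cotorsion of the abelianization from Lemma \ref{prodmodsum}, Remark \ref{homHig} and Proposition \ref{Known}, certify nontriviality with commutator-length witnesses, and prove (a) $\Rightarrow$ (b) by the first-order expressibility of ``perfect with commutator length at most $n$.'' The only cosmetic differences are that you merge the non-perfect and perfect cases into one argument via your reformulation of (b), and you spell out the {\L}o\'{s}-theorem step that the paper leaves implicit in the phrase ``its commutator length is infinite.''
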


\begin{proof} (b) $\Rightarrow$ (a).  Take $\mathcal{U}$ to be a nonprincipal ultrafilter on $\omega$.  Let $H$ be the ultrapower $(\prod_{\omega} G)/\mathcal{U}$.  Then $H$ is Higman-complete (Lemma \ref{prodmodsum}), elementarily equivalent to $G$, and the abelianization $H/H'$ is cotorsion.  It remains to prove that $H/H'$ is nontrivial.  If $G$ is not perfect, then the natural surjective map from $H = (\prod_{\omega}G)/\mathcal{U}$ to $(\prod_{\omega}G/G')/\mathcal{U}$ gives a nontrivial abelian image of $H$, so $H/H'$ is nontrivial.  On the other hand if $G$ is perfect then $G$ has elements of arbitrarily long commutator length, so select an element $(g_i)_{i \in \omega}$ in $\prod_{i \in \omega} G$ such that the commutator length of $g_i$ is greater than $i$.  Then the element $[(g_i)_{i \in \omega}]$ in $H$ is not in the kernel of the abelianization of $H$ (since its commutator length is infinite).

\noindent (a) $\Rightarrow$ (b) We prove the contrapositive.  Supposing that $G$ is perfect and of bounded commutator length, say length $j$, every group $H$ which is elementarily equivalent to $G$ also is perfect of bounded commutator length at most $j$ (since this property can be expressed using a first-order formula).  Then the abelianization of such an $H$ is trivial.
\end{proof}

Continuing our progress toward Theorem \ref{New} we give two more lemmas.

\begin{lemma}\label{product}  If $\{G_j\}_{j \in J}$ is a collection of Higman-complete groups then the product $$G = \prod_{j \in J} G_j$$ is Higman-complete.
\end{lemma}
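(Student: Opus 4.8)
The plan is to argue coordinatewise, exploiting the fact that in a direct product both the group operations and hence the evaluation of any free word are carried out separately in each factor.

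First I would fix the data: a sequence $(f_i)_{i \in \omega}$ of elements of $G = \prod_{j \in J} G_j$ and a sequence $(w_i)_{i \in \omega}$ of free words in two variables. Writing $f_i = (f_i(j))_{j \in J}$ with $f_i(j) \in G_j$, I note that for each fixed $j \in J$ the sequence $(f_i(j))_{i \in \omega}$ is a sequence of elements of the Higman-complete group $G_j$. Applying Higman-completeness of $G_j$ to this sequence together with the word sequence $(w_i)_{i \in \omega}$, I obtain a sequence $(h_i^{(j)})_{i \in \omega}$ of elements of $G_j$ satisfying $h_i^{(j)} = w_i(f_i(j), h_{i+1}^{(j)})$ for every $i \in \omega$. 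Doing this for all $j \in J$ at once uses the axiom of choice, which is unproblematic.

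Next I would assemble these into elements of $G$ by setting $h_i = (h_i^{(j)})_{j \in J} \in G$ for each $i \in \omega$. Since multiplication and inversion in $\prod_{j \in J} G_j$ are defined coordinatewise, for any free word $w$ in two variables and any $a = (a(j))_{j \in J}, b = (b(j))_{j \in J} \in G$ one has $w(a,b) = (w(a(j),b(j)))_{j \in J}$. Consequently $w_i(f_i, h_{i+1}) = (w_i(f_i(j), h_{i+1}^{(j)}))_{j \in J} = (h_i^{(j)})_{j \in J} = h_i$ for every $i \in \omega$, which is exactly the relation required by the definition of Higman-completeness. Hence $G$ is Higman-complete.

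There is no genuine obstacle here: the sole content of the lemma is that Higman-completeness passes to arbitrary direct products, and the coordinatewise character of the product operations — in particular the independence of the choices made in the individual factors $G_j$ — makes the verification routine. The only subtlety, if any, is remembering that we may invoke Higman-completeness in each factor independently, which is precisely what the product structure allows.
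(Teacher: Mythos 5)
Your proof is correct and is essentially identical to the paper's: both solve the Higman-completeness equations coordinatewise in each factor $G_j$ and then observe that, since word evaluation in a direct product is computed coordinatewise, the assembled elements satisfy the required relations. Your explicit remark about the use of choice across $J$ is a fine (and accurate) addition, though the paper leaves it implicit.
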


\begin{proof}
Given a sequence $(\zeta_i)_{i}$ of elements in $G$, and a sequence $(w_i)_{i \in \omega}$ of free words in two variables, we solve the necessary equations coordinatewise.  More precisely, for a fixed $j \in J$ we have a sequence $(\zeta_i(j))_{i \in \omega}$ of elements of $G_j$ and we select $(\eta_i(j))_{i \in \omega}$ so that $\eta_i(j) = w_i(\zeta_i(j), \eta_{i + 1}(j))$ for each $i \in \omega$ (which is possible since $G_j$ is Higman-complete).  Now, it is clear that $\eta_i = w_i(\zeta_i, \eta_{i + 1})$ holds, since the equation holds in each coordinate.
\end{proof}

\begin{definition}\cite[\textsection 4.1]{F}  An abelian group is \emph{reduced} if it has no nontrivial divisible elements.
\end{definition}

Lemma \ref{divisibleimage} below is immediate from \cite[Theorem 4.3.1]{F}, and Lemma \ref{algebraicallycompact} is probably known.

\begin{lemma}\label{divisibleimage}  An abelian group $D$ is divisible if and only if $D$ is the homomorphic image of $\bigoplus_{\kappa}\mathbb{Q}$ for some cardinal $\kappa$.
\end{lemma}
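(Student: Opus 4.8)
Proof proposal for Lemma \ref{divisibleimage} (the divisibility characterization).

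The plan is to prove both directions directly, leaning on the structure theory of divisible groups cited as \cite[Theorem 4.3.1]{F}, which states that every divisible abelian group is a direct sum of copies of $\mathbb{Q}$ and of Pr\"ufer groups $\mathbb{Z}(p^\infty)$ for various primes $p$. First I would handle the easy direction: if $D$ is a homomorphic image of $\bigoplus_{\kappa}\mathbb{Q}$, then since $\mathbb{Q}$ is divisible and a direct sum of divisible groups is divisible, $\bigoplus_{\kappa}\mathbb{Q}$ is divisible; and the homomorphic image of a divisible group is divisible (given $d \in D$ and $n \in \mathbb{Z}\setminus\{0\}$, pull $d$ back to some preimage, divide there, push forward). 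Hence $D$ is divisible.

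For the forward direction, suppose $D$ is divisible. By \cite[Theorem 4.3.1]{F} write $D \cong (\bigoplus_{I}\mathbb{Q}) \oplus (\bigoplus_{p}\bigoplus_{J_p}\mathbb{Z}(p^\infty))$. It suffices to realize each summand as a quotient of some $\bigoplus_{\lambda}\mathbb{Q}$ and then take the direct sum of these presentations, re-indexing the union of the index sets as a single cardinal $\kappa$. The rational summands need nothing. For a Pr\"ufer summand $\mathbb{Z}(p^\infty)$, I would exhibit it as a quotient of $\mathbb{Q}$ itself: the map $\mathbb{Q} \to \mathbb{Q}/\mathbb{Z}[1/p]$ (or equivalently $\mathbb{Q}\to\mathbb{Q}/\mathbb{Z} \to \mathbb{Z}(p^\infty)$ via projection onto the $p$-primary component) is surjective onto $\mathbb{Z}(p^\infty)$, since $\mathbb{Q}/\mathbb{Z}[1/p] \cong \mathbb{Z}(p^\infty)$. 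Assembling, $D$ is a quotient of a direct sum of copies of $\mathbb{Q}$ indexed by $\kappa = |I| + \sum_p |J_p|$ (as cardinals).

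There is essentially no obstacle here; the only point requiring a moment's care is the bookkeeping that a direct sum of quotient maps $q_a\colon \bigoplus_{\lambda_a}\mathbb{Q} \twoheadrightarrow D_a$ yields a quotient map $\bigoplus_a \bigoplus_{\lambda_a}\mathbb{Q} \twoheadrightarrow \bigoplus_a D_a$, which is routine. If one prefers to avoid invoking the classification of divisible groups, an alternative is: a divisible group $D$ is in particular a $\mathbb{Q}$-vector space quotient in the torsion-free case, and in general one chooses a generating set $\{d_a\}_{a\in\kappa}$ of $D$, lets $F = \bigoplus_{\kappa}\mathbb{Z} \twoheadrightarrow D$ be the canonical surjection, and uses that $F$ embeds in $\bigoplus_\kappa \mathbb{Q}$ as a subgroup whose quotient $(\bigoplus_\kappa\mathbb{Q})/\ker$ maps onto $D$; here one needs divisibility of $D$ to extend the surjection $F\to D$ along $F\hookrightarrow\bigoplus_\kappa\mathbb{Q}$ (injectivity of $D$ as a $\mathbb{Z}$-module). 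Either route is short, so I would present whichever the surrounding text makes most self-contained, most likely the first.
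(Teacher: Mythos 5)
Your proposal is correct and takes essentially the paper's approach: the paper gives no separate argument, merely remarking that the lemma is immediate from the structure theorem for divisible groups \cite[Theorem 4.3.1]{F}, and your proof (easy direction by pushing divisibility forward; hard direction by decomposing $D$ into copies of $\mathbb{Q}$ and Pr\"ufer groups and realizing each $\mathbb{Z}(p^\infty)$ as a quotient of $\mathbb{Q}$) is exactly what that remark leaves to the reader. One small slip worth fixing: $\mathbb{Q}/\mathbb{Z}[1/p] \cong \bigoplus_{q \neq p}\mathbb{Z}(q^\infty)$, not $\mathbb{Z}(p^\infty)$ --- you want the localization $\mathbb{Z}_{(p)}$ (denominators prime to $p$) there --- but your parallel route $\mathbb{Q} \twoheadrightarrow \mathbb{Q}/\mathbb{Z} \twoheadrightarrow \mathbb{Z}(p^\infty)$ via projection onto the $p$-primary component is correct, so the argument stands.
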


\begin{lemma}\label{algebraicallycompact}  If $\mathbb{A}$ is an algebraically compact group, then $\mathbb{A}$ is a homomorphic image of a product of finite cyclic groups.
\end{lemma}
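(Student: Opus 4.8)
The plan is to reduce the statement, using the classical structure theory of algebraically compact groups, to a handful of building blocks and to deal with each; the one genuinely new ingredient is a trick for divisible groups.

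First I would establish that \emph{every divisible abelian group $D$ is a homomorphic image of a product of finite cyclic groups}. By Lemma \ref{divisibleimage} it is enough to treat $D = \bigoplus_{\kappa}\mathbb{Q}$, and we may take $\kappa$ infinite. Let $p_1 < p_2 < \cdots$ enumerate the primes and set $P = \prod_{n \geq 1}\mathbb{Z}/p_n\mathbb{Z}$. The torsion subgroup of $P$ is $\bigoplus_{n \geq 1}\mathbb{Z}/p_n\mathbb{Z}$, and the quotient $Q$ of $P$ by it is both torsion-free and divisible: an element of $P$ that becomes $m$-torsion in $Q$ must have almost all coordinates zero, since $p_n \nmid m$ forces the $n$-th coordinate to vanish; and $P$ is $q$-divisible modulo its torsion for every prime $q$, because $q$ is invertible modulo $p_n$ whenever $p_n \neq q$, so that dividing coordinatewise alters a given element in at most one coordinate, hence not at all in $Q$. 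Thus $Q$ is a $\mathbb{Q}$-vector space, nonzero of dimension $2^{\aleph_0}$ (as $|P| = 2^{\aleph_0}$ while its torsion subgroup is countable). Consequently the product of finite cyclic groups $\prod_{\kappa}P$ surjects onto $\prod_{\kappa}Q$, a $\mathbb{Q}$-vector space of dimension $2^{\kappa} \geq \kappa$, which in turn surjects onto $\bigoplus_{\kappa}\mathbb{Q}$.

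Next I would note that \emph{$\mathbb{Z}_p$, the group of $p$-adic integers, is a homomorphic image of $\prod_{n \geq 1}\mathbb{Z}/p^n\mathbb{Z}$}: the natural map $\mathbb{Z}_p \to \prod_{n \geq 1}\mathbb{Z}/p^n\mathbb{Z}$ given by reduction modulo the $p^n$ is a pure monomorphism, and $\mathbb{Z}_p$ is algebraically compact (being a compact group, hence a direct summand of a compact group), so this pure embedding splits, exhibiting $\mathbb{Z}_p$ as a direct summand of $\prod_{n \geq 1}\mathbb{Z}/p^n\mathbb{Z}$. Finally, by the classical structure theorem for algebraically compact groups (see \cite{F}), $\mathbb{A}$ is a direct summand, hence a homomorphic image, of a product $\prod_{i \in I}Q_i$ each of whose factors is a finite cyclic group, a quasicyclic group $\mathbb{Z}(p^\infty)$, a group $\mathbb{Z}_p$ of $p$-adic integers, or a copy of $\mathbb{Q}$. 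Sorting the factors by type, $\prod_{i \in I}Q_i$ becomes a product of four groups: a product of finite cyclic groups; a product of groups $\mathbb{Z}_p$, which is a homomorphic image of a product of finite cyclic groups by the previous observation; a product of quasicyclic groups, which is divisible and so is covered by the first step; and a product of copies of $\mathbb{Q}$, likewise divisible. Since the class of homomorphic images of products of finite cyclic groups is visibly closed under arbitrary products and under homomorphic images, $\prod_{i \in I}Q_i$, and therefore $\mathbb{A}$, lies in this class.

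I expect the first step to be the crux: one must check that $Q$ is torsion-free \emph{and} divisible, not just one of the two, and keep the cardinal arithmetic honest. Secondary points needing care are the appeal in the second step to the splitting of pure embeddings out of an algebraically compact group, and the precise citation for the structure theorem in the last step; alternatively, one can avoid that theorem by writing $\mathbb{A}$ as a summand of a compact group $\mathbb{K}$ and decomposing $\mathbb{K}$, as an abstract group, as a divisible group (its identity component) plus a profinite abelian group, the latter being a pure, hence direct, summand of the product of the finite groups in an inverse system defining it.
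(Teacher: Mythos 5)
Your proposal is correct and follows essentially the same route as the paper: the crux in both is realizing $\mathbb{Q}$ (indeed a large $\mathbb{Q}$-vector space) as a quotient of $\prod_{p}\mathbb{Z}/p\mathbb{Z}$ by its torsion subgroup $\bigoplus_p\mathbb{Z}/p\mathbb{Z}$ and then handling arbitrary divisible groups via Lemma \ref{divisibleimage} and a vector-space/cardinality argument, while the reduced part is dispatched by the classical structure theory of algebraically compact groups. The only cosmetic difference is that the paper quotes \cite[Corollary 6.1.4]{F} to make the reduced summand directly a summand of a product of cyclic groups of prime power order, whereas you invoke a finer decomposition and treat the $\mathbb{Z}_p$ factors by hand via pure-injectivity.
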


\begin{proof}
Letting $D$ denote the maximal divisible subgroup of $\mathbb{A}$, we can write $\mathbb{A} = D \oplus R$ where $R$ is a reduced algebraically compact group.  A reduced algebraically compact group is a direct summand of a product of cyclic groups of prime power order \cite[Corollary 6.1.4]{F}, so in particular $R$ is a homomorphic image of a product of finite cyclic groups.

Letting $\mathbb{P}$ denote the set of prime natural numbers, it is easy to see that $B = (\prod_{p \in \mathbb{P}} \mathbb{Z}/p\mathbb{Z})/(\bigoplus_{p \in \mathbb{P}} \mathbb{Z}/p\mathbb{Z})$ is torsion-free, divisible, and infinite.  Thus $B$ is a vector space over the field $\mathbb{Q}$ of dimension greater than $0$, so by picking a basis and projecting to a single coordinate we obtain a homomorphism from $B$ onto $\mathbb{Q}$.  In particular, $\mathbb{Q}$ is a homomorphic image of a product of finite cyclic groups.

By Lemma \ref{divisibleimage} take $\kappa$ to be a cardinal such that $D$ is a homomorphic image of $\bigoplus_{\kappa}\mathbb{Q}$.  Now $\bigoplus_{\kappa}\mathbb{Q}$ is a homomorphic image of $\mathbb{Q}^{\kappa}$ (by a vector space argument), and $\mathbb{Q}^{\kappa}$ is a homomorphic image of a product of finite cyclic groups (since $\mathbb{Q}$ is such an image).  Thus $D$ is a homomorphic image of a product of finite cyclic groups, and as $R$ also satisfies this property, the group $\mathbb{A} = D \oplus R$ also has this property.

\end{proof}

\begin{theorem}\label{New}  Let $A$ be an abelian group.  The following are equivalent.

\begin{enumerate}

\item $A$ is cotorsion.

\item $A$ is a homomorphic image of $(\prod_{i \in \omega} G_i)/(\bigoplus_{i \in \omega} G_i)$ for some sequence $(G_i)_{i \in \omega}$ of groups which are not necessarily abelian.

\item $A$ is a homomorphic image of a product of finite cyclic groups.

\item $A$ is a homomorphic image of a product $\prod_{j \in J} F_j$ where each $F_j$ is a finite group which is not necessarily abelian.

\end{enumerate}

\end{theorem}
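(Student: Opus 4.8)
The plan is to prove the cycle of implications $(1)\Rightarrow(3)\Rightarrow(4)\Rightarrow(2)\Rightarrow(1)$, so that each step reduces to invoking one of the earlier results or to a short construction.

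For $(1)\Rightarrow(3)$ I would use Proposition \ref{Known}: a cotorsion group $A$ is a homomorphic image of some algebraically compact group $\mathbb{A}$, and by Lemma \ref{algebraicallycompact} the group $\mathbb{A}$ is in turn a homomorphic image of a product of finite cyclic groups; composing the two surjections gives $(3)$. The implication $(3)\Rightarrow(4)$ is immediate, since a finite cyclic group is in particular a finite group.

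The implication $(4)\Rightarrow(2)$ is the one needing a small construction. Suppose $A$ is a homomorphic image of $P=\prod_{j\in J}F_j$ with every $F_j$ finite. I would take $G_i=P$ for all $i\in\omega$ and fix a nonprincipal ultrafilter $\mathcal{U}$ on $\omega$. Since $\bigoplus_{i\in\omega}P$ is contained in the kernel of the canonical map $\prod_{i\in\omega}P\to(\prod_{i\in\omega}P)/\mathcal{U}$, there is a surjection $(\prod_{i\in\omega}P)/(\bigoplus_{i\in\omega}P)\to(\prod_{i\in\omega}P)/\mathcal{U}$. Now rewrite $\prod_{i\in\omega}P=\prod_{i\in\omega}\prod_{j\in J}F_j\cong\prod_{j\in J}\prod_{i\in\omega}F_j$; the natural map from an ultraproduct to the product of the coordinatewise ultraproducts, $(\prod_{i\in\omega}\prod_{j\in J}F_j)/\mathcal{U}\to\prod_{j\in J}\big((\prod_{i\in\omega}F_j)/\mathcal{U}\big)$, is surjective (a preimage of $(\alpha_j)_{j}$ is obtained by choosing representatives $a_j\colon\omega\to F_j$ and setting $\zeta(i)_j=a_j(i)$), and each $(\prod_{i\in\omega}F_j)/\mathcal{U}\cong F_j$ because $F_j$ is finite, so every map $\omega\to F_j$ is constant on a set in $\mathcal{U}$. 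Chaining these surjections yields $(\prod_{i\in\omega}G_i)/(\bigoplus_{i\in\omega}G_i)\twoheadrightarrow P$, and composing with the given surjection $P\to A$ gives $(2)$.

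Finally, for $(2)\Rightarrow(1)$: Lemma \ref{prodmodsum} says $(\prod_{i\in\omega}G_i)/(\bigoplus_{i\in\omega}G_i)$ is Higman-complete, hence by Remark \ref{homHig} its homomorphic image $A$ is Higman-complete, and since $A$ is abelian, Proposition \ref{Known} gives that $A$ is cotorsion. The only step requiring genuine care is $(4)\Rightarrow(2)$, specifically checking that the coordinatewise $\mathcal{U}$-limit map $\prod_{i\in\omega}P\to P$ is well defined, surjective, and kills $\bigoplus_{i\in\omega}P$; all three come down to the finiteness of each $F_j$ (forcing each sequence to a constant value on a set in $\mathcal{U}$) together with the observation that an element of $\bigoplus_{i\in\omega}P$ is trivial in each $J$-coordinate for all but finitely many $i\in\omega$.
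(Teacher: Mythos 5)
Your proof is correct, but it closes the loop differently from the paper in one place. The paper establishes the equivalence of (1) and (2) separately — using the direction (i)$\Rightarrow$(iv) of Proposition \ref{Known} (a cotorsion group is an image of an abelian ultraproduct, hence of $(\prod_i A_i)/(\bigoplus_i A_i)$) — and then runs the cycle (1)$\Rightarrow$(3)$\Rightarrow$(4)$\Rightarrow$(1), where (4)$\Rightarrow$(1) is proved by showing a product of finite groups is Higman-complete: each finite $F$ is isomorphic to its own ultrapower, hence Higman-complete by Lemma \ref{prodmodsum}, and then Lemma \ref{product} and Remark \ref{homHig} finish. You instead run a single cycle (1)$\Rightarrow$(3)$\Rightarrow$(4)$\Rightarrow$(2)$\Rightarrow$(1), and your (4)$\Rightarrow$(2) is an explicit chain of surjections $(\prod_{i}P)/(\bigoplus_{i}P)\twoheadrightarrow(\prod_{i}P)/\mathcal{U}\twoheadrightarrow\prod_{j}\bigl((\prod_{i}F_j)/\mathcal{U}\bigr)\cong P\twoheadrightarrow A$, which is a correct argument (the key points — the map to the product of coordinatewise ultrapowers is surjective, finite ultrapowers collapse, and finitely supported elements die in the ultraproduct — all check out). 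What your route buys is that Lemma \ref{product} and the (i)$\Rightarrow$(iv) implication of Proposition \ref{Known} are never needed, at the cost of re-deriving by hand, inside (4)$\Rightarrow$(2), essentially the same ultrapower-rigidity of finite groups that the paper packages through Higman-completeness; the paper's version is more modular, yours is more self-contained and makes the surjection onto $P$ concrete.
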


\begin{proof}

We will first prove the equivalence of (1) and (2).   Suppose that $A$ is cotorsion.  By Proposition \ref{Known} we know that there is a sequence $(A_i)_{i \in \omega}$ of abelian groups and a nonprincipal ultrafilter $\mathcal{U}$ on $\omega$ so that $A$ is a homomorphic image of the ultraproduct $(\prod_{i \in \omega} A_i)/\mathcal{U}$.  As $(\prod_{i \in \omega} A_i)/\mathcal{U}$ is a homomorphic image of $(\prod_{i \in \omega} A_i)/(\bigoplus_{i \in \omega} A_i)$, so is $A$.  For the other direction, if $A$ is a homomorphic image of a group of form $(\prod_{i \in \omega} G_i)/(\bigoplus_{i \in \omega} G_i)$ then $A$ is Higman-complete (Lemma \ref{prodmodsum}), so $A$ is cotorsion (Proposition \ref{Known}).

Now we argue that (1) $\Rightarrow$ (3) $\Rightarrow$ (4) $\Rightarrow$ (1).  That (1) implies (3) follows from the fact that a cotorsion group is a homomorphic image of an algebraically compact group (Proposition \ref{Known}) and the fact that an algebraically compact abelian group is a homomorphic image of a product of finite cyclic groups (Lemma \ref{algebraicallycompact}).  That (3) implies (4) is evident.  For the last implication, note that a finite group $F$ is Higman-complete.  To see this, take a nonprincipal ultrafilter $\mathcal{V}$ on $\omega$ and recall that $F$ is isomorphic to the ultrapower $(\prod_{\omega} F)/\mathcal{V}$, so $F$ is Higman-complete (Proposition \ref{prodmodsum}).  Therefore a product of finite groups is Higman-complete (Lemma \ref{product}), and so $A$ is Higman-complete as the homomorphic image of a Higman-complete group (Remark \ref{homHig}).  Thus $A$ is cotorsion (Proposition \ref{Known}).

\end{proof}

Now it is clear that Question \ref{Kour2013a} has an affirmative solution (using (2) $\Rightarrow$ (1) of Theorem \ref{New} and (i) $\Rightarrow$ (iv) of Proposition \ref{Known}) as does Question \ref{Kour2013b} (using (4) $\Rightarrow$ (3)).

\end{section}

\begin{section}{An analogue of a theorem of Chase}\label{Chasesection}

In the light of the very abelian nature of many of the previous results, we give a general method for producing (nonabelian) examples which give a negative solution to Question \ref{localtoglobal}.  These will be torsion groups which are not the homomorphic images of any nonprincipal ultraproduct $(\prod_{i \in \omega} G_i)/\mathcal{U}$.  For this we will prove an analogue of a result of Chase \cite{Ch}.

\begin{definitions}  We'll say a group $H$ is a \emph{bounded torsion group} if there exists some $n \in \omega$ such that $(\forall h \in H) h^n = 1_H$.  For such an $H$ we let $\ex(H) \in \omega$ denote the smallest such $n$.  If a torsion group is not bounded then we shall say it is an \emph{unbounded torsion group}.
\end{definitions}

\begin{definitions}  Suppose $\{H_j\}_{j \in J}$ is a collection of bounded torsion groups.  For $h = (h_j)_{j \in J} \in \prod_{j \in J} H_j$ we let $\supp(h) = \{j \in J \mid h_j \neq 1_{H_j}\}$.  Define $$\Bdd\{H_j\}_{j \in J} = \{h \in \prod_{j \in J} H_j \mid (\exists n \in \omega)(\forall j \in \supp(h)) \ex(H_j) \leq n\}.$$
\end{definitions}

\begin{remark}  It is clear that $\Bdd\{H_j\}_{j \in J}$ is a subgroup of $\prod_{j \in J} H_j$, since if $n$ and $n'$ respectively witness that $h$ and $h'$ are in $\Bdd\{H_j\}_{j \in J}$ then $\max(n, n')$ witnesses that $hh' \in \Bdd\{H_j\}_{j \in J}$.  Moreover, this subgroup is torsion.
\end{remark}

\begin{theorem}\label{Chaseanalogue}  Let $G$ be a topological group which is either

\begin{enumerate}

\item completely metrizable; or

\item Hausdorff and locally countably compact

\end{enumerate}

\noindent and each $H_j$ a bounded torsion group and let $\phi: G \rightarrow \Bdd\{H_j\}_{j \in J}$ be an abstract group homomorphism.  Then there is some open neighborhood $U$ of the identity $1_G$ and $N \in \omega \setminus \{0\}$ such that $(\forall h \in \phi(U)) h^N = 1$.
\end{theorem}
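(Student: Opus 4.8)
The plan is to mimic the classical Baire-category argument of Chase, adapting it to handle the noncommutativity and the boundedness condition. Suppose for contradiction that no such $U$ and $N$ exist. Then for every open neighborhood $U$ of $1_G$ and every $N \in \omega \setminus \{0\}$ there is some $u \in U$ with $\phi(u)^N \neq 1$. I will use this to build, by recursion, a sequence $(u_k)_{k \in \omega}$ in $G$ converging rapidly to $1_G$ together with a strictly increasing sequence of indices such that the "tails" of the corresponding products $\phi(u_k)$ pick out coordinates $j \in J$ with $\ex(H_j)$ growing without bound, while the product $\prod_{k} \phi(u_k)$ (suitably associated) still converges in $G$ and hence lands in $\Bdd\{H_j\}_{j \in J}$ under $\phi$. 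That product would then have unbounded exponents on its support, contradicting membership in $\Bdd\{H_j\}_{j \in J}$.

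More concretely, I would carry out the following steps. First, reduce to the case where $J$ is countable: since we only ever produce countably many elements and each $\phi(u_k)$ has support on which $\ex(H_j)$ is bounded, the relevant part of $J$ is a countable union of finite sets, so we may as well assume $J = \omega$ and that $\ex(H_j) \to \infty$ (discarding repeated exponent values and reindexing; if only finitely many exponent values occur the conclusion is trivial with $N$ the least common multiple). Second, set up the recursion: having chosen $u_0, \ldots, u_{k-1}$ and an index $m_{k-1}$, note that the partial product $p_{k-1} = \phi(u_0)\cdots\phi(u_{k-1})$ lies in $\Bdd\{H_j\}_{j \in \omega}$, hence has bounded exponent, say bounded by $M_{k-1}$, on all of $\omega$. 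Choose $u_k$ in a small enough neighborhood $U_k$ of $1_G$ — small enough that in case (1) the series $\sum d(u_k, 1_G)$ converges with respect to a complete metric, or in case (2) the partial products stay inside a fixed countably compact neighborhood — and small enough that $\phi(u_k)$ restricted to coordinates $j$ with $\ex(H_j) \le M_{k-1}$ (there are finitely many such, and $U_k$ can be chosen inside the preimage of the trivial element on each) is trivial, yet $\phi(u_k)$ is nontrivial somewhere, necessarily at a coordinate with exponent exceeding $M_{k-1}$; pick $m_k$ to be such a coordinate. Third, verify convergence: in case (1) the rapid convergence makes $(p_k)$ Cauchy, and in case (2) one arranges $p_k$ to lie eventually in a fixed countably compact set and extracts a convergent subsequence, giving a limit $p \in G$. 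Fourth, identify the limit's image: because $\phi(u_k)$ is eventually trivial on every fixed coordinate $j$ (once $M_{k-1} \ge \ex(H_j)$), the coordinate-wise limit of $p_k$ exists and equals $\phi(p)$ on each coordinate; in particular $\phi(p)_{m_k} = p_k{}_{,m_k} = \phi(u_k)_{m_k} \neq 1$ for all large $k$ (the later factors being trivial there), so $\phi(p)$ has support meeting coordinates of arbitrarily large exponent — contradicting $\phi(p) \in \Bdd\{H_j\}_{j \in \omega}$.

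The main obstacle I anticipate is the bookkeeping that makes the coordinate-wise limit of the partial products $p_k$ actually coincide with $\phi(p)$: unlike in the abelian case, the "tail" $\phi(u_k)\phi(u_{k+1})\cdots$ need not vanish on a coordinate $j$ merely because each individual factor eventually does, since the relevant $M_{k-1}$ depends on the partial product built so far, and a later factor with a large exponent coordinate could in principle contribute at $j$ before $M_{k-1}$ catches up to $\ex(H_j)$. The fix is to interleave the recursion correctly: before choosing $u_k$, first compute the exponent bound $M_{k-1}$ of the current partial product $p_{k-1}$ over all of $\omega$, and only then shrink $U_k$ so that $\phi(u_k)$ is trivial on the (finitely many) coordinates with exponent at most $\max(M_{k-1}, k)$; this guarantees that for any fixed $j$, all factors $\phi(u_k)$ with $k$ large are trivial at $j$, so the product stabilizes there, and $\phi$ being a homomorphism into the product means $\phi(p)_j$ is that stable value. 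The only genuine use of the topological hypothesis is securing the limit $p$, and the two cases split exactly as in the statement — a complete metric gives a Cauchy sequence, local countable compactness gives a convergent subnet/subsequence — so that part is routine once the recursion is set up carefully.
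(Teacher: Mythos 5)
There is a genuine gap, and it sits exactly at the point your theorem is designed to handle: $\phi$ is only an \emph{abstract} homomorphism, but two of your steps silently use continuity of $\phi$. First, in the recursion you want $u_k$ in a prescribed small neighborhood $U_k$ of $1_G$ with $\phi(u_k)$ trivial on all coordinates $j$ with $\ex(H_j)\le \max(M_{k-1},k)$, and you justify this by ``choosing $U_k$ inside the preimage of the trivial element on each''; but $\phi^{-1}$ of that (closed) subgroup of the product is just an abstract subgroup of $G$ and need not contain any neighborhood of $1_G$ (also, there may be infinitely many such coordinates $j$, only finitely many exponent \emph{values}). This particular step can be repaired by a power trick: take $N_k$ to be the lcm of the finitely many exponent values $\le \max(M_{k-1},k)$, pick $u\in U$ with $\phi(u)^{N_k}\ne 1$ (which the negated conclusion does give), and use $u^{N_k}$, having first shrunk $U$ so that $U^{N_k}$ lies in the desired neighborhood. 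Second, and fatally, in your ``fourth'' step you pass from the limit $p=\lim_k u_0\cdots u_{k-1}$ in $G$ to the claim that $\phi(p)_j$ equals the stable value of $(\phi(u_0)\cdots\phi(u_{k-1}))_j$. Writing $p=(u_0\cdots u_{k-1})t_k$ with $t_k\to 1_G$, this claim amounts to $\pi_j\circ\phi(t_k)=1$ for large $k$, i.e.\ to (sequential) continuity of $\phi$ at $1_G$ on the relevant coordinates --- precisely what is not assumed and what the whole statement is about. Nothing in your setup controls $\phi(t_k)$.

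The paper's proof avoids this by never commuting $\phi$ with a limit. It builds nested elements $g_{m,k}=g_m(\cdots(g_{k-1}(g_k)^{r_{k-1}})^{r_{k-2}}\cdots)^{r_m}$ whose limits $g_{m,\infty}$ satisfy the \emph{exact algebraic identities} $g_{m,\infty}=g_m\,g_{m+1,\infty}^{\,r_m}$ in $G$ (this uses only continuity of multiplication in $G$, plus completeness or countable compactness to get the limit). Applying the abstract homomorphism $\phi$ to these finite identities is legitimate, and the exponents are chosen so that the unknown image of the tail is killed algebraically: $r_N$ is the lcm of $\ex(H_j)$ over $j\in\supp(\phi(g_N))$, so at the chosen coordinate $j_0$ one has $\pi_{j_0}\circ\phi\bigl(g_{N+1,\infty}^{\,r_N}\bigr)=1$ no matter what $\phi$ does to $g_{N+1,\infty}$. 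If you want to salvage your argument, you need this kind of device --- make the tail enter only through a power divisible by $\ex(H_{j_0})$, and extract the contradiction from an order count at a single coordinate of an element (like $g_{0,\infty}$) that is already known to have finite order --- rather than hoping that topological smallness of the tail forces its $\phi$-image to vanish.
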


\begin{proof}
Let $H = \Bdd\{H_j\}_{j \in J}$.  Suppose by way of contradiction that the conclusion fails.  The strategy will be to produce a system of equations (like in the definition of Higman-completeness) whose solution will give a contradiction.  Suppose we are in situation (1) and let $d$ be a complete metric which induces the topology on $G$.  We inductively choose

\begin{enumerate}[(a)]

\item a sequence $(g_m)_{m \in \omega}$ of elements in $G$; and

\item a sequence $(r_m)_{m \in \omega}$ of positive natural numbers.

\end{enumerate}

\noindent Select $g_0 \in G$ such that $\phi(g_0)$ has order greater than $1 = 0!$.  Let $r_0$ be the least common multiple of the finite nonempty set $\{\ex(H_j)|j \in \supp(\phi(g_0))\}$.  Assume we have chosen $g_0, \ldots, g_m$ and $r_0, \ldots, r_m$.  Select neighborhood $U$ of $1_G$ so that $g \in U$ implies 

\begin{itemize}

\item $d(g_mg^{r_m}, g_m) < \frac{1}{3}$;

\item $d(g_{m-1}(g_mg^{r_m})^{r_{m - 1}}, g_{m - 1}(g_m)^{r_{m-1}}) < \frac{1}{3^2}$;

$\vdots$

\item $d(g_0(g_1(\cdots (g_{m - 1} g^{r_m})^{r_{m - 1}}\cdots )^{r_1})^{r_0}, g_0(g_1(\cdots (g_m)^{r_{m - 1}} \cdots)^{r_1})^{r_0}) < \frac{1}{3^m }$.

\end{itemize}

\noindent Select $g_{m + 1} \in U$ such that $\phi(g_{m + 1})$ has order greater than $$((m + 1)r_0\cdots r_m)!\cdot r_0\cdot r_1 \cdots  \cdot r_m.$$  Let $r_{m + 1}$ be the least common multiple of the finite nonempty set $\{\ex(H_j)| j \in \supp(\phi(g_{m + 1}))\}$.  Now our sequences are defined.

For $m \leq k \in \omega$ define the element $$g_{m, k} = g_m(\cdots (g_{k - 1}(g_k)^{r_{k - 1}} )^{r_{k - 2}}\cdots)^{r_m}.$$  For $m \leq s < t \in \omega$ we have by construction

$$
\begin{array}{ll}
d(g_{m, s},g_{m, t}) & \leq d(g_{m, s}, g_{m, s + 1})+d(g_{m, s + 1}, g_{m, s + 2}) + \cdots +d(g_{m, t - 1}, g_{m, t}) \vspace*{2mm}\\
& < \frac{1}{3^{s + 1 - m}} + \frac{1}{3^{s+2 - m}} + \cdots + \frac{1}{3^{t - m}}
\end{array}
$$

\noindent In particular the sequence $g_{m, m}, g_{m, m + 1}, g_{m, m + 1}, \ldots$ is Cauchy and converges to, say, $g_{m, \infty}$.  By continuity we see that for each $m \in \omega$ the equation $$g_{m, \infty} = g_mg_{m + 1, \infty}^{r_m}$$ holds.

We know that $\phi(g_{0, \infty})$ has finite order and so let $N \in \omega \setminus \{0\}$ be such that $(\phi(g_{0, \infty}))^N = 1_H$.  As $(\phi(g_N))^{r_0 \cdots r_{N - 1}}$ has order greater than $(N \cdot r_0 \cdots r_{N - 1})!$, select $j_0 \in \supp(g_N)$ such that $\pi_{j_0}(\phi(g_N))^{r_0 \cdots r_{N - 1}}$ has order greater than $N \cdot r_0 \cdots r_{N - 1}$, where $\pi_{j_0}: \prod_{j \in J} H_j \rightarrow H_{j_0}$ is projection.  Clearly $\pi_{j_0} \circ \phi(g_N(g_{N+1, \infty})^{r_N})) = \pi_{j_0}\circ\phi(g_N)$ since $\ex(H_{j_0})$ divides $r_N$.  If $j_0 \in \bigcup_{\ell = 0}^{N - 1}\supp(g_{\ell})$ then $\ex(H_{j_0})$ divides $r_k$ for some $0 \leq k \leq N-1$, so in particular $\pi_{j_0}\circ \phi(g_N^{r_0 \cdots r_{N-1}}) = 1_{H_{j_0}}$, which is a contradiction.  Therefore $$\pi_{j_0}\circ \phi(g_{0, \infty}) = \pi_{j_0} \circ \phi(g_0(g_1(\cdots g_{N-1}(g_Ng_{N+1, \infty}^{r_{N}})^{r_{N-1}} \cdots)^{r_1})^{r_0}) = \pi_{j_0}\circ \phi(g_N^{r_0 \cdots r_{N-1}})$$ has order greater than $N \cdot r_0 \cdots r_{N - 1} > N$, while $\phi(g_{0, \infty})$ had order $N$, contradiction.  

In situation (2) we construct

\begin{enumerate}[(i)]

\item a sequence $(g_m)_{m \in \omega}$ of elements in $G$;

\item a sequence $(r_m)_{m \in \omega}$ of positive natural numbers; and

\item a sequence $(V_m)_{m \in \omega}$ of open neighborhoods of $1_G$.

\end{enumerate}

Pick $g_0 \in G$ such that $\phi(g_0)$ has order greater than $1$, take $r_0$ to be the least common multiple of $\{\ex(H_j)|j \in \supp(\phi(g_0))\}$, and let $V_0$ be an open neighborhood of $1_G$ such that $\overline{V_0}$ is countably compact.  Assume we have made the selections for all subscripts less than or equal to $m$.  Select $g_{m + 1} \in V_m$ such that $\phi(g_{m + 1})$ has order greater than $((m + 1)r_0\cdots r_m)!\cdot r_0\cdot r_1 \cdots  \cdot r_m$.  Let $r_{m + 1}$ be the least common multiple of $\{\ex(H_j)| j \in \supp(\phi(g_{m + 1}))\}$.  Let $V_{m + 1}$ be a neighborhood of $1_G$ such that $$g_{m + 1}V_{m + 1}^{r_{m + 1}} \subseteq V_m.$$

Note that $g_{m + 1}(\overline{V_{m + 1}})^{r_{m + 1}} \subseteq \overline{V_m}$ for each $m \in \omega$.  Letting $$Y_m = g_0(g_1(g_2(\cdots g_m(\overline{V_m})^{r_m}  \cdots)^{r_2})^{r_1})^{r_0}$$ for $m \in \omega$, we have $Y_m \supseteq Y_{m + 1}$ and $Y_m$ is countably compact.  Select $g_{\infty} \in \bigcap_{m \in \omega} Y_m$ and let $N$ be the order of $\phi(g_{\infty})$.  Select $\underline{g} \in \overline{V_N}$ for which $$g_{\infty} = g_0(g_1(g_2(\cdots g_N(\underline{g})^{r_N}\cdots)^{r_2})^{r_1})^{r_0}.$$  Select $j_0$ and derive the same contradiction as in situation (1).
\end{proof}

\begin{corollary}\label{intheproduct}  Suppose $\{G_i\}_{i \in \omega}$ is a collection of groups, $\{H_j\}_{j \in J}$ is a collection of bounded torsion groups, and $\mathcal{U}$ is a nonprincipal ultrafilter on $\omega$.  Then the image of any abstract homomorphism $$\phi: (\prod_{\omega} G_i)/\mathcal{U} \rightarrow \Bdd\{H_j\}_{j \in J}$$ is a bounded torsion group.
\end{corollary}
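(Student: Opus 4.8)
The plan is to deduce this directly from Theorem~\ref{Chaseanalogue}, part (1). The ultraproduct $(\prod_{\omega}G_i)/\mathcal{U}$ carries no obviously useful topology, so instead I would topologize the full product: give each $G_i$ the discrete topology and let $P=\prod_{i\in\omega}G_i$ carry the product topology. Then $P$ is a completely metrizable topological group, being a countable product of (discrete, hence completely metrizable) topological groups. Writing $q\colon P\to(\prod_{\omega}G_i)/\mathcal{U}$ for the quotient map, the composite $\tilde{\phi}=\phi\circ q\colon P\to\Bdd\{H_j\}_{j\in J}$ is an abstract group homomorphism from a completely metrizable group into a $\Bdd$ of bounded torsion groups, so Theorem~\ref{Chaseanalogue} applies to $\tilde{\phi}$.

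Applying that theorem, I get an open neighborhood $U$ of $1_P$ and some $N\in\omega\setminus\{0\}$ with $h^N=1$ for all $h\in\tilde{\phi}(U)$. By definition of the product topology, $U$ contains a basic clopen neighborhood of $1_P$; since each factor is discrete, there is $k\in\omega$ with
$$B_k:=\{g\in P\mid g(i)=1_{G_i}\text{ for all }i\le k\}\subseteq U .$$
The next step is to check that $q(B_k)=(\prod_{\omega}G_i)/\mathcal{U}$: given $g\in P$, define $g'\in B_k$ by $g'(i)=1_{G_i}$ for $i\le k$ and $g'(i)=g(i)$ for $i>k$; because $\mathcal{U}$ is nonprincipal it contains the Fr\'echet filter, so the cofinite set $\{i:i>k\}$ on which $g$ and $g'$ agree lies in $\mathcal{U}$, whence $q(g)=q(g')$. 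Consequently the image of $\phi$ equals $\phi(q(B_k))=\tilde{\phi}(B_k)\subseteq\tilde{\phi}(U)$, so every element of the image of $\phi$ has order dividing $N$; that is, the image is a bounded torsion group (of exponent at most $N$).

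The one idea that has to be spotted is to put the topology on $\prod_{\omega}G_i$ rather than on the ultraproduct, and then to exploit that a single basic clopen neighborhood of the identity already maps onto the whole ultraproduct. Once that is seen, there is no real obstacle: everything reduces to the bookkeeping above together with Theorem~\ref{Chaseanalogue}, and part (2) of that theorem could be invoked in place of part (1) if one preferred a local-compactness argument, though part (1) seems cleanest here.
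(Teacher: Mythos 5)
Your argument is correct and is essentially the paper's own proof: topologize $\prod_{\omega}G_i$ as a product of discrete groups (completely metrizable), apply Theorem \ref{Chaseanalogue} to $\phi\circ q$, and use that a basic neighborhood $\{1_{G_0}\}\times\cdots\times\{1_{G_k}\}\times\prod_{i>k}G_i$ maps onto the whole ultraproduct since $\mathcal{U}$ is nonprincipal. No gaps; the bookkeeping with $B_k$ matches the paper exactly.
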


\begin{proof}
Assume the hypotheses.  By considering each group $G_i$ as a discrete topological group, it is well-known that the topological group $\prod_{i \in \omega} G_i$ is completely metrizable, and the subgroups of form $\{1_{G_0}\}\times \cdots \times \{1_{G_M}\} \times \prod_{i > M} G_i$ give a basis of neighborhoods for identity.  Applying Theorem \ref{Chaseanalogue} we have some $M, N \in \omega$ for which $\phi \circ \rho(\{1_{G_0}\}\times \cdots \times \{1_{G_M}\} \times \prod_{i > M} G_i)$ consists of elements of order dividing $N$, where $\rho: \prod_{\omega} G_i \rightarrow (\prod_{\omega} G_i)/\mathcal{U}$ is the natural map.    Since the restriction $\rho \upharpoonright \{1_{G_0}\}\times \cdots \times \{1_{G_M}\} \times \prod_{i > M} G_i$ is surjective onto the ultraproduct, we are done.
\end{proof}

For convenience, in the remainder of this section we'll say a group which is a homomorphic image of a nonprincipal ultraproduct over $\omega$ is a \emph{ui-group} (``ui'' for ultraproduct image).

\begin{corollary}\label{directsum}  If $\{H_j\}_{j \in J}$ is a collection of bounded torsion groups for which $\{\ex(H_j)\}_{j \in J}$ is unbounded in $\omega$, then the direct sum $\bigoplus_{j \in J} H_j$ is not a ui-group.
\end{corollary}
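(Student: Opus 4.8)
The plan is to derive this as a quick consequence of Corollary \ref{intheproduct}. The key observation is that the direct sum $\bigoplus_{j \in J} H_j$ sits inside $\Bdd\{H_j\}_{j \in J}$ as a subgroup: indeed, if $h \in \bigoplus_{j \in J} H_j$ then $\supp(h)$ is finite, so the set $\{\ex(H_j) \mid j \in \supp(h)\}$ is a finite set of natural numbers and hence bounded, witnessing $h \in \Bdd\{H_j\}_{j \in J}$.

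Now suppose toward a contradiction that $\bigoplus_{j \in J} H_j$ is a ui-group, say it is the image of an abstract homomorphism $\psi: (\prod_{i \in \omega} G_i)/\mathcal{U} \rightarrow \bigoplus_{j \in J} H_j$ for some sequence $(G_i)_{i \in \omega}$ of groups and some nonprincipal ultrafilter $\mathcal{U}$ on $\omega$. Composing $\psi$ with the inclusion $\bigoplus_{j \in J} H_j \hookrightarrow \Bdd\{H_j\}_{j \in J}$ gives an abstract homomorphism $\phi: (\prod_{i \in \omega} G_i)/\mathcal{U} \rightarrow \Bdd\{H_j\}_{j \in J}$ whose image is exactly $\bigoplus_{j \in J} H_j$. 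By Corollary \ref{intheproduct}, this image is a bounded torsion group; that is, there is some $N \in \omega \setminus \{0\}$ with $h^N = 1$ for every $h \in \bigoplus_{j \in J} H_j$.

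But this is impossible: since $\{\ex(H_j)\}_{j \in J}$ is unbounded in $\omega$, we may pick some $j_0 \in J$ with $\ex(H_{j_0}) > N$, and then by definition of $\ex$ there is an element $a \in H_{j_0}$ with $a^N \neq 1_{H_{j_0}}$. The element of $\bigoplus_{j \in J} H_j$ which is $a$ in coordinate $j_0$ and trivial elsewhere then fails to satisfy $h^N = 1$, a contradiction. Hence $\bigoplus_{j \in J} H_j$ is not a ui-group.

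I do not anticipate any real obstacle here; the entire content lies in recognizing the direct sum as a subgroup of $\Bdd\{H_j\}_{j \in J}$ and invoking Corollary \ref{intheproduct}. One small point worth stating carefully is that the image of $\phi$ really is all of $\bigoplus_{j \in J} H_j$ (not merely contained in it), so that the conclusion of Corollary \ref{intheproduct} applies to the full direct sum rather than to some subgroup — but this is immediate from the construction of $\phi$ as $\psi$ followed by an inclusion.
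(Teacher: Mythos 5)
Your proof is correct and is exactly the intended argument: the paper gives no separate proof for this corollary precisely because it follows, as you show, from the embedding $\bigoplus_{j \in J} H_j \leq \Bdd\{H_j\}_{j \in J}$ (finite support gives bounded exponents on the support) together with Corollary \ref{intheproduct} and the unboundedness of $\{\ex(H_j)\}_{j \in J}$. Nothing is missing, and your care in noting that the image of the composed map is the full direct sum is exactly the right point to make explicit.
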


Part of the attraction of Corollary \ref{directsum} is that it provides many examples of non-ui-groups which are (unbounded) torsion.  One can apply Corollary \ref{directsum} to the collection $\{S_j\}_{j \in \omega}$ where $S_j$ is the symmetric group on $j$ elements, or to the collection $\{\mathfrak{b}(2, j)\}_{j \in \omega \setminus \{0, 1\}}$ where $\mathfrak{b}(2, j)$ is the free Burnside group of rank $2$ and exponent $j$.

There exist ui-groups which are unbounded torsion.  For example, a quasicyclic group $\mathbb{Z}(p^{\infty})$ is cotorsion and therefore a ui-group.  We do not know the answers to the following.

\vspace{.25cm}

\noindent \textbf{Questions.}

\begin{enumerate}[(A)] \item If a ui-group is unbounded torsion then does it include a nontrivial divisible subgroup?

\item Is there a bounded torsion group which is not a ui-group?  \end{enumerate}

\vspace{.25cm}

\noindent A classical open problem asks whether there exists a compact unbounded torsion group $C$ \cite[Qu. 17.93]{KhMaz}.  Such a $C$ would be profinite \cite[Theorem 28.20]{HeRo} and therefore any divisible subgroup would be trivial.  Moreover for any nonprincipal ultrafilter $\mathcal{U}$ on $\omega$ it is easy to construct a homomorphism from the ultrapower $(\prod_{\omega} C)/\mathcal{U}$ onto $C$.  Thus $C$ would give a negative answer to Question (A).

We give a few more examples of non-ui-groups.  Recall that a group $H$ is \emph{cm-slender} if every abstract group homomorphism from a completely metrizable group to $H$ has open kernel \cite{ConCor}.  A cm-slender group is necessarily torsion-free, and examples include Baumslag-Solitar groups and nontrivial word-hyperbolic groups.  Arguing as in Corollary \ref{intheproduct} it is clear that a group having nontrivial homomorphic image in a cm-slender group is not a ui-group.  Reasoning similarly, an infinite subgroup of the mapping class group of a connected compact surface is not a ui-group \cite[Theorem 9.1]{BoCo}, nor is a group which is nontrivially a free product of groups \cite[Theorem 1.5]{Sl}.

\begin{remark}  By making easy changes to the proof of Theorem \ref{Chaseanalogue} one can show an analogous statement where $G$ is instead the fundamental group $\pi_1(E)$ of the infinite earring $E$ \cite{CanCon}.
\end{remark}

\end{section}

\begin{section}{A common element below two ultrafilters} \label{RudinKeislersection}

In this brief section we remind the reader of the Rudin-Keisler ordering (see e.g. \cite[Ch. 11]{Halbeisen}) and point out why Question \ref{makingthesameimages} has a consistent affirmative answer.

\begin{definition}  For ultrafilters $\mathcal{U}$ and $\mathcal{U}'$ on $\omega$ we write $\mathcal{U} \leq_{RK} \mathcal{U}'$ if there exists a function $f: \omega \rightarrow \omega$ such that $f(\mathcal{U}') := \{X \subseteq \omega \mid (\exists Y \in \mathcal{U}') X \supseteq f(Y)\} = \mathcal{U}$.
\end{definition}

\begin{proposition}\label{RKprop}  Suppose that $\mathcal{U} \leq_{RK} \mathcal{U}'$.  Then every homomorphic image of an ultraproduct over $\mathcal{U}'$ is also a homomorphic image of an ultraproduct over $\mathcal{U}$.
\end{proposition}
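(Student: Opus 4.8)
The plan is to reduce the statement to a single surjection between ultraproducts and then to produce that surjection by a fibre-wise regrouping of coordinates. First I would unwind the hypothesis: $\mathcal{U} \leq_{RK} \mathcal{U}'$ supplies a function $f : \omega \to \omega$ with $f(\mathcal{U}') = \mathcal{U}$, and it is routine that this is equivalent to the condition that $X \in \mathcal{U}$ if and only if $f^{-1}(X) \in \mathcal{U}'$; I would use the latter form throughout. Now suppose $A$ is a homomorphic image of $(\prod_{i \in \omega} G_i)/\mathcal{U}'$. It suffices to exhibit a family $(G'_j)_{j \in \omega}$ of groups together with a surjective homomorphism $(\prod_{j \in \omega} G'_j)/\mathcal{U} \twoheadrightarrow (\prod_{i \in \omega} G_i)/\mathcal{U}'$, since composing with the given surjection onto $A$ then exhibits $A$ as a homomorphic image of an ultraproduct over $\mathcal{U}$.

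For the construction I would set $G'_j = \prod_{i \in f^{-1}(j)} G_i$ for each $j \in \omega$, with the convention that an empty product is the trivial group. Because the fibres $\{f^{-1}(j)\}_{j \in \omega}$ partition $\omega$, regrouping coordinates along this partition yields a canonical identification $\prod_{j \in \omega} G'_j = \prod_{i \in \omega} G_i$ of groups. Under this identification let $q' : \prod_{i} G_i \to (\prod_i G_i)/\mathcal{U}'$ and $q : \prod_j G'_j \to (\prod_j G'_j)/\mathcal{U}$ denote the two natural quotient maps. The goal then becomes to show that $q'$ factors through $q$, i.e. that $\ker q \subseteq \ker q'$; the induced map on the quotient is the desired surjection, surjectivity being inherited from $q'$.

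The heart of the matter, and the step I expect to require the most care, is the inclusion $\ker q \subseteq \ker q'$, where the Rudin--Keisler relation is used essentially. Fix $\zeta \in \prod_i G_i$ and write $E = \{i \in \omega \mid \zeta_i = 1\}$. A coordinate $\zeta'_j \in G'_j$ is trivial exactly when $\zeta_i = 1$ for every $i \in f^{-1}(j)$, so the set of indices $j$ at which the regrouped element vanishes is $\{j \mid f^{-1}(j) \subseteq E\}$. Thus $\zeta \in \ker q$ reads $\{j \mid f^{-1}(j) \subseteq E\} \in \mathcal{U}$, which by the Rudin--Keisler characterization is equivalent to $f^{-1}\big(\{j \mid f^{-1}(j) \subseteq E\}\big) \in \mathcal{U}'$. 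For any $i$ in this preimage one has $i \in f^{-1}(f(i)) \subseteq E$, so the preimage is contained in $E$; since $\mathcal{U}'$ is upward closed this forces $E \in \mathcal{U}'$, which is precisely $\zeta \in \ker q'$. This establishes the kernel inclusion and completes the argument. The remaining points are entirely routine: checking that the regrouping identification is a group isomorphism, and observing that empty fibres cause no trouble, since they contribute only trivial factors and are handled vacuously by the displayed conditions.
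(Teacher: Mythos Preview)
Your proof is correct and follows essentially the same fibre-regrouping idea as the paper: both take $f$ witnessing $\mathcal{U}\leq_{RK}\mathcal{U}'$, bundle the coordinates of $\prod_i G_i$ along the fibres $f^{-1}(j)$, and then check that the $\mathcal{U}$-kernel lands in the $\mathcal{U}'$-kernel via the equivalence $X\in\mathcal{U}\Leftrightarrow f^{-1}(X)\in\mathcal{U}'$. The only difference is cosmetic: the paper interposes free groups $F_{\kappa_j}$ surjecting onto each fibre product $\prod_{i\in f^{-1}(j)} G_i$ before forming the $\mathcal{U}$-ultraproduct, whereas you take $G'_j=\prod_{i\in f^{-1}(j)}G_i$ directly and use the canonical regrouping isomorphism---your version is the more streamlined of the two.
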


\begin{proof}  Assume the hypotheses.  Let $(G_i)_{i \in \omega}$ be a sequence of groups.  Take $f: \omega \rightarrow \omega$ such that $f(\mathcal{U}') = \mathcal{U}$.  For each $j \in \omega$ in the image of $f$ select a cardinal $\kappa_j$ large enough that there is a homomorphic surjection $\phi_j: F_{\kappa_j} \rightarrow \prod_{i \in f^{-1}(\{j\})} G_i$ from the free group $F_{\kappa_j}$ of rank $\kappa_j$.  For $j \in \omega$ which is not in the image of $f$ we let $\kappa_j = 0$.  Now we obtain a composition $\Psi = \rho \circ \Phi$ of homomorphisms

$$\xymatrix{\prod_{j \in \omega} F_{\kappa_j} \ar[r]^{\Phi} & \prod_{i \in \omega}G_i \ar[r]^{\rho} & (\prod_{i \in \omega} G_i) /\mathcal{U}'}$$

\noindent which are evidently surjective ($\Phi$ is defined componentwise by the $\phi_j$).  Suppose that $(W_j)_{j \in \omega} \in \prod_{\omega} F_{\kappa_j}$ is such that $W_j$ is identity for almost every $j$, say for all elements $j$ in $X \in \mathcal{U}$.  Then for all $j \in X$ we have $\phi_j(W_j)$ is identity, and as $f^{-1}(X) \in \mathcal{U}'$, we have $(W_j)_{j \in \omega}$ in the kernel of $\Psi$.  So, $\Psi$ descends to a homomorphic surjection $\overline{\Psi}: (\prod_{j \in \omega} F_{\kappa_j})/\mathcal{U} \rightarrow (\prod_{i \in \omega} G_i) /\mathcal{U}'$.  Thus every homomorphic image of $(\prod_{i \in \omega} G_i)/\mathcal{U}'$ is also a homomorphic image of $(\prod_{j \in \omega} F_{\kappa_j})/\mathcal{U}$.
\end{proof}

The assertion \emph{Near coherence of filters} (NCF) states that for any two nonprincipal ultrafilters $\mathcal{U}$ and $\mathcal{U}'$ there exists a finite-to-one function $f: \omega \rightarrow \omega$ such that $f(\mathcal{U}) = f(\mathcal{U}')$ \cite[\textsection 5]{Blass}.  In particular, for any two nonprincipal ultrafilters $\mathcal{U}$ and $\mathcal{U}'$, NCF gives a nonprincipal ultrafilter $\mathcal{U}''$ which is $\leq_{RK}$ below both $\mathcal{U}$ and $\mathcal{U}'$.  By Proposition \ref{RKprop} the homomorphic images of ultraproducts over $\mathcal{U}$ or $\mathcal{U}'$ are also homomorphic images of ultraproducts over $\mathcal{U}''$.  Since NCF is consistent with ZFC \cite{BlassShelah}, Question \ref{makingthesameimages} has a consistent positive answer.

\end{section}

\section*{Acknowledgement}

The author thanks the referees for pointing out clarifications and typos in a very timely manner.

\end{document}